\documentclass{amsart}

\usepackage{amsmath, amssymb, amsthm, comment}

\newtheorem{theorem}{Theorem}[section] 
\newtheorem{lemma}[theorem]{Lemma}     
\newtheorem{corollary}[theorem]{Corollary}
\newtheorem{proposition}[theorem]{Proposition}

\theoremstyle{remark}
\newtheorem*{acknow}{Acknowledgement}

\newcommand\ppair[1]{(\,#1\,)}
\newcommand\biggppair[1]{\biggl(#1\biggr)}

\DeclareMathOperator\linsp{span}

\newcommand\col{\colon}
\newcommand\sub{\subseteq}

\newcommand{\R}{\mathsf{R}}

\newcommand{\A}{\mathrm{A}}
\newcommand{\B}{\mathrm{B}}

\newcommand{\set}[2]{\{\,{\textstyle#1};\,{\textstyle #2}\,\}}

\let\la\langle
\let\ra\rangle

\newcommand{\QG}{\mathbb{G}}
\newcommand{\bn}{\mathbb{N}}

\DeclareMathOperator\wlim{w*-lim}

\begin{document}

\title[Inclusions of TROs and conditional expectations]%
 {Inclusions of ternary rings of operators and conditional expectations}

\author{Pekka Salmi}
\address{Department of Mathematical Sciences, University of Oulu, PL 3000,
FI-90014 Oulun yliopisto, Finland}
\email{pekka.salmi@iki.fi}

\author{Adam Skalski}
\address{Institute of Mathematics of the Polish Academy of Sciences,
ul.\'Sniadeckich 8, 00-956 Warszawa, Poland}
\email{a.skalski@impan.pl}

\begin{abstract}
It is shown that if $T$ is a ternary ring of operators (TRO), $X$ is a nondegenerate sub-TRO of $T$ and there exists a contractive idempotent surjective map $P:T \to X$ then $P$ has a unique, explicitly described extension to a conditional expectation between the associated linking algebras. A version of the result for $W^*$-TROs is also presented and some applications mentioned.
\end{abstract}

\keywords{Ternary ring of operators, linking algebra, expectation} \subjclass[2010]{ Primary 46L07, Secondary 17C65, 47L05}

\maketitle


\maketitle

Ternary rings of operators (TROs) are norm-closed subspaces of
operators acting between Hilbert spaces which are in addition stable
under the ternary product: $(a,b,c) \mapsto a b^* c$. They form a
special class of concrete operator spaces and in fact possess also an abstract characterisation in terms of the operator-space-theoretic properties (\cite{NealRusso}). A fundamental tool for their study is the construction of a so-called linking algebra, which is a particular $C^*$-algebra containing the TRO in question as a corner. As TROs share many properties with their associated linking algebras, the construction facilitates the application of operator algebraic techniques to the analysis of objects which are not algebras themselves (examples of this type can be found in \cite{effros-ozawa-ruan} and \cite{KaurRuan}).

In this note we follow this philosophy and apply it to the analysis of
a sub-TRO $X$ of a given TRO $T$, showing that if $X$ is expected,
i.e.\ there exists a contractive idempotent map from $T$ onto $X$ and
a natural nondegeneracy condition is satisfied, then the map in
question extends to a (unique, explicitly described) conditional
expectation between the respective linking algebras. The proof is based on certain matrix calculations and the formula for the norm of an element in a left (or right) linking algebra due to Hamana (\cite{hamana:triple-envelopes}). An analogous result is valid for $W^*$-TROs, i.e.\ TROs which are closed in the weak$^*$-topology, with the respective maps being normal. We indicate also some applications.

\section{Notation and preliminaries}

For  Hilbert spaces $H$ and  $K$, let $\B(H,K)$ denote the set of all bounded
operators from $H$ to $K$. A \emph{TRO} (i.e.\ \emph{ternary ring of
operators}) is a closed subspace $T\sub \B(H,K)$
such that $T T^* T \sub T$.
In this case, the norm closed linear spans $C:=\langle T T^*\rangle$
and $D:=\langle T^* T\rangle$ are $C^*$-subalgebras
of $\B(K)$ and $\B(H)$, respectively, and
these $C^*$-algebras act non-degenerately on $T$
(i.e.\ $T = C T = T D$; note also that for subsets of normed spaces we will always use angled brackets to denote closed linear spans).
The norms on these so-called \emph{left and right linking algebras}
$C$ and $D$ are determined by the module actions:
for every $c\in C$,
\begin{equation} \label{eq:mod-norm}
\|c\| = \sup\set{\|ct\|}{t\in T, \|t\|\le 1}
\end{equation}
and the analogous equation holds for every $d\in D$.
(See Hamana~\cite{hamana:triple-envelopes}, Lemma~2.3.)
The \emph{linking algebra} of $T$ is the $C^*$-algebra
\[
\A_T:= \begin{pmatrix}
         \la T T^*\ra & T \\
                  T^* & \la T^*T \ra
       \end{pmatrix} \sub \B(K\oplus H).
\]

Suppose that $T$ is a TRO and
$P\col T \to X$ is a completely contractive
projection onto a sub-TRO $X\sub T$.
As follows from the work of Youngson (\cite{youngson:cc-proj}, Corollary~3),
the map $T$ is then a \emph{TRO conditional expectation}
in the sense that
\[
\begin{split}
P(ax^*y) &= P(a)x^*y \\
P(xa^*y) &= x P(a)^* y \\
P(xy^* a) &= xy^*P(a)
\end{split}
\qquad\text{for every } a\in T,\, x,y\in X.
\]
Effros, Ozawa and Ruan showed in \cite{effros-ozawa-ruan},
Theorem~2.5, that already a contractive projection onto a sub-TRO
is necessarily a completely contractive TRO conditional
expectation.

We say that the sub-TRO $X\sub T$ is \emph{nondegenerate} if
\[
\langle X T^* T\rangle = T \qquad \text{and}\qquad
\langle T T^* X\rangle = T.
\]
Since $\langle T T^* T \rangle = T$,
it follows from these identities that in fact
\begin{gather*}
\langle X X^* T\rangle = T \qquad
\langle T X^* X\rangle = T \qquad
\langle X T^*\rangle = \langle T T^* \rangle \qquad
\langle T^* X\rangle = \langle T^* T \rangle.
\end{gather*}
The following proposition is a straighforward consequence
of the above identities.

\begin{proposition}
A sub-TRO $X\sub T$ is nondegenerate if and
only if the $C^*$-subalgebra $\A_X$ is nondegenerate in $\A_T$.
\end{proposition}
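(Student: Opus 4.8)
The plan is to prove the two implications separately, translating the module-action identities into statements about products inside the $C^*$-algebra $\A_T$. Recall that $\A_X$ is nondegenerate in $\A_T$ precisely when $\langle \A_X \A_T\rangle = \A_T$ (equivalently $\langle \A_T \A_X\rangle = \A_T$, by taking adjoints, since both algebras are $*$-closed). So the whole argument reduces to computing the closed linear span of the matrix product $\A_X \A_T$ block by block and comparing the four corners against the four blocks of $\A_T$.

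First I would write down the generic product. Using the block form, a product of an element of $\A_X$ with an element of $\A_T$ produces, in each of the four corners, closed linear spans of the following sets: the top-left corner receives $\langle X X^*\rangle \langle T T^*\rangle + \langle X T^*\rangle$ (reading off the $(1,1)$ and the $(1,2)\cdot(2,1)$ contributions), the top-right receives $\langle X X^*\rangle T + X\langle T^* T\rangle$, the bottom-left receives $X^*\langle T T^*\rangle + \langle X^* X\rangle T^*$, and the bottom-right receives $X^* T + \langle X^* X\rangle\langle T^* T\rangle$. The target corners are $\langle T T^*\rangle$, $T$, $T^*$ and $\langle T^* T\rangle$ respectively. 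The key step is then to verify that, under the nondegeneracy hypothesis, each of these four spans fills its target corner; conversely, that if they all do, then $X$ is nondegenerate.

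For the forward direction I would feed in the four derived identities already supplied in the excerpt, namely $\langle X X^* T\rangle = T$, $\langle T X^* X\rangle = T$, $\langle X T^*\rangle = \langle T T^*\rangle$ and $\langle T^* X\rangle = \langle T^* T\rangle$. For instance the top-right corner contains $\langle X X^* T\rangle = T$, so it equals $T$; the top-left contains $\langle X T^*\rangle = \langle T T^*\rangle$, so that corner is filled; and the two lower corners follow by the symmetric computations (or by applying the adjoint, exploiting that $\A_X$ and $\A_T$ are self-adjoint). Hence $\langle \A_X \A_T\rangle = \A_T$. For the converse I would read the implications backwards: nondegeneracy of $\A_X$ forces, corner by corner, the containments $T \sub \langle X X^* T\rangle + \langle X T^* T\rangle$ and so on, from which the defining identities $\langle X T^* T\rangle = T$ and $\langle T T^* X\rangle = T$ can be extracted (using that each summand is already contained in $T$ since $X \sub T$, so no term can overshoot the target).

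The main obstacle I anticipate is purely bookkeeping: products of closed linear spans need not be closed, so one must be careful that $\langle S\rangle\langle S'\rangle$ and $\langle S S'\rangle$ agree up to closure, and that $\langle \A_X \A_T\rangle$ is genuinely captured by the span of the four corner-products rather than something smaller. This is handled by the standard fact that in a $C^*$-algebra $\langle \langle E\rangle\langle F\rangle\rangle = \langle EF\rangle$ for subsets $E, F$, together with an approximate-identity or Cohen factorization argument to pass between, say, $\langle X X^* T\rangle$ and $\langle X X^*\rangle T$; the nondegeneracy $T = CT = TD$ of the original module actions makes these reductions automatic. Once the closure subtleties are dispatched, the equivalence is immediate from matching the four corners.
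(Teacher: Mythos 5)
Your proposal follows exactly the route the paper intends: the paper gives no written proof, declaring the proposition a straightforward consequence of the derived identities $\langle X X^* T\rangle = T$, $\langle T X^* X\rangle = T$, $\langle X T^*\rangle = \langle T T^*\rangle$ and $\langle T^* X\rangle = \langle T^* T\rangle$, and your corner-by-corner computation of $\langle \A_X \A_T\rangle$ is precisely that verification spelled out; the forward direction as you present it is correct.

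One justification in your converse is wrong as stated, though the fix is immediate. From the $(1,2)$-corner identity $\langle X X^* T + X T^* T\rangle = T$ you cannot conclude $\langle X T^* T\rangle = T$ merely because ``each summand is already contained in $T$, so no term can overshoot'' --- that reasoning only confirms the \emph{sum} equals $T$, and a sum of two proper closed subspaces can perfectly well be dense. What you actually need is the absorption $X X^* T \sub X T^* T$, valid because $X \sub T$ gives $X^* \sub T^*$; this collapses the corner to $\langle X T^* T\rangle$, which then equals $T$. The same absorption handles the $(2,1)$ corner: $X^* X T^* \sub X^* T T^*$, so that corner is $\langle X^* T T^*\rangle = T^*$, and taking adjoints yields the second defining identity $\langle T T^* X\rangle = T$. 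With that one-line correction your argument is complete and matches the paper's (implicit) proof.
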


For a map $P:T \to X$  and $t \in T$ we let $P^\dagger:T^* \to X^*$ be defined by the formula $ P^{\dagger}(t)= P(t^*)^*$. If $X\subset T$ is nondegenerate and a TRO conditional expectation from $T$ onto $X$ exists, we will say that $X$ is an \emph{expected sub-TRO} of $T$.

\section{Main result}

This section contains the main result of the paper. Observe that the construction in the proof follows (and in a sense generalises) that given in Proposition 2.1 (iv) of \cite{hamana:triple-envelopes}.

\begin{theorem} \label{thm:expectation-extension}
Suppose that $T$ is a TRO and $P\col T \to X$ is a contractive idempotent map
onto a nondegenerate sub-TRO $X\sub T$.
Then there is a $C^*$-algebra conditional expectation
from the linking algebra $\A_T$ onto the linking algebra $\A_X$
\begin{equation}
E:=\begin{pmatrix} P P^\dagger & P \\
                       P^\dagger & P^\dagger P
       \end{pmatrix}
 \col \begin{pmatrix} \langle{T T^*}\rangle & T \\
                       T^* & \langle{T^* T}\rangle
       \end{pmatrix}
       \to
      \begin{pmatrix} \langle{X X^*}\rangle & X \\
                       X^* & \langle{X^* X}\rangle
       \end{pmatrix}
 \label{formE}
 \end{equation}
where the corner maps satisfy
\[
P P^\dagger \biggl(\sum_{i1=}^n a_i x_i^*\biggr) = \sum_{i=1}^n P(a_i) x_i^*
\qquad\text{and}\qquad
P^\dagger P \biggl(\sum_{i=1}^n   x_i^*a_i\biggr) = \sum_{i=1}^n   x_i^* P(a_i)
\]
for $n \in \bn$, $a_1,\ldots,a_n\in T$ and $x_1,\ldots,x_n\in X$. The extension is unique in the
sense that if
\[
E' = \begin{pmatrix} E'_{11} & E'_{12} \\
                       E'_{21} & E'_{22}
       \end{pmatrix}
\]
is another conditional expectation from $A_T$ onto $A_X$
such that $E'_{12} = P$, then $E' = E$.
\end{theorem}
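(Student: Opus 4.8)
The plan is to reduce everything to the assertion that $E$ is a well-defined contractive projection of $\A_T$ onto $\A_X$: once this is in place, Tomiyama's theorem (a norm-one projection of a $C^*$-algebra onto a $C^*$-subalgebra is automatically a conditional expectation, hence positive, completely positive and an $\A_X$-bimodule map) supplies all the remaining structure. So the existence part splits into three points: the diagonal corner maps are well defined, $E$ is idempotent with range $\A_X$, and $\|E\|\le 1$.

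First I would make sense of the diagonal corners. By nondegeneracy $\langle TX^*\rangle=\langle TT^*\rangle$, so finite sums $\sum_i a_ix_i^*$ ($a_i\in T$, $x_i\in X$) are dense in $\langle TT^*\rangle$, and the issue is that $\sum_i a_ix_i^*\mapsto\sum_i P(a_i)x_i^*$ be independent of the representation and contractive. Here Hamana's norm formula \eqref{eq:mod-norm}, applied to the TRO $X$, is decisive: for $x\in X$ with $\|x\|\le 1$ the identity $P(ax^*y)=P(a)x^*y$ gives $\bigl(\sum_i P(a_i)x_i^*\bigr)x=\sum_i P(a_ix_i^*x)=P(cx)$, where $c=\sum_i a_ix_i^*$, whence $\bigl\|\sum_i P(a_i)x_i^*\bigr\|=\sup_{\|x\|\le1}\|P(cx)\|\le\|c\|$ since $P$ is contractive. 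The same formula forces independence of the representation, so $PP^\dagger$ is a well-defined contraction, and the symmetric computation with the right module action handles $P^\dagger P$; the off-diagonal corners $P$ and $P^\dagger$ are contractive by hypothesis. Idempotence is then immediate, as each corner restricts to the identity on the corresponding corner of $\A_X$ (e.g.\ $PP^\dagger(\sum_i x_iy_i^*)=\sum_i x_iy_i^*$), so $E$ is a projection with range $\A_X$.

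The main obstacle is contractivity $\|E\|\le1$ of the assembled block map, which does \emph{not} follow from contractivity of the individual corners. I would again invoke a norm formula in the spirit of \eqref{eq:mod-norm}: since the corner projection $\bigl(\begin{smallmatrix}0&0\\0&1\end{smallmatrix}\bigr)$ is full in $\A_X$, the algebra acts faithfully on the column $\bigl(\begin{smallmatrix}X\\\langle X^*X\rangle\end{smallmatrix}\bigr)$, giving $\|E(M)\|=\sup\|E(M)w\|$ over contractive module elements $w=\bigl(\begin{smallmatrix}x\\e\end{smallmatrix}\bigr)$. The expectation identities $x\,P^\dagger P(f)=P(xf)$ and $P(t)e=P(te)$ then collapse $E(M)w$ to $\bigl(\begin{smallmatrix}P(u)\\P^\dagger P(f)\end{smallmatrix}\bigr)$, where $\bigl(\begin{smallmatrix}u\\f\end{smallmatrix}\bigr)=Mw$, so that the required bound becomes the Schwarz-type inequality $\bigl\|P(u)^*P(u)+\bigl(P^\dagger P(f)\bigr)^*\bigl(P^\dagger P(f)\bigr)\bigr\|\le\|u^*u+f^*f\|$. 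This is the step I expect to demand the most careful matrix bookkeeping: expanding $f$ through its $\langle X^*T\rangle$-representation $\sum_k z_k^* b_k$ turns $P^\dagger P(f)$ into a row-with-coefficients $\sum_k z_k^* P(b_k)$, and the inequality is then extracted from the column contractivity of $P$ (available because $P$ is completely contractive, by the recalled theorem of Effros, Ozawa and Ruan) combined with the module action of $(z_k^*)$.

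Finally, uniqueness is the clean part and uses no norm estimates, only the bimodule property of any conditional expectation. Given $E'$ onto $\A_X$ with $E'_{12}=P$, factor $\bigl(\begin{smallmatrix}ax^*&0\\0&0\end{smallmatrix}\bigr)=\bigl(\begin{smallmatrix}0&a\\0&0\end{smallmatrix}\bigr)\bigl(\begin{smallmatrix}0&0\\x^*&0\end{smallmatrix}\bigr)$ with the right-hand factor in $\A_X$; the $\A_X$-bimodularity of $E'$ forces $E'_{11}(ax^*)=E'_{12}(a)\,x^*=P(a)x^*$, and density of such elements in $\langle TT^*\rangle$ gives $E'_{11}=PP^\dagger$. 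Multiplying instead on the left by elements of $\A_X$ yields $E'_{22}=P^\dagger P$, while $E'_{21}=P^\dagger$ because $E'$ is $*$-preserving and $E'_{12}=P$. Hence $E'=E$.
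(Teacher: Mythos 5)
Your overall architecture coincides with the paper's: the Tomiyama reduction, the treatment of the diagonal corners via Hamana's formula \eqref{eq:mod-norm} applied to $X$, the reduction of contractivity of the block map to contractivity of the column map $\bigl(\begin{smallmatrix}P\\ P^\dagger P\end{smallmatrix}\bigr)$ by viewing $\A_X$ as the left linking algebra of the TRO $\bigl(\begin{smallmatrix}X\\ \langle X^*X\rangle\end{smallmatrix}\bigr)$, and the uniqueness argument via bimodularity are exactly the paper's steps and are correct as you state them. The gap is precisely the step you leave as a sketch --- the inequality
\[
\bigl\|P(u)^*P(u)+\bigl(P^\dagger P(f)\bigr)^*P^\dagger P(f)\bigr\|\le\|u^*u+f^*f\|,
\qquad u\in T,\ f\in\langle T^*T\rangle,
\]
which is the real content of the theorem --- and the argument you indicate for it does not work. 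Fixing an arbitrary representation $f=\sum_k z_k^*b_k$ and factoring
\[
\begin{pmatrix}P(u)\\ \sum_k z_k^*P(b_k)\end{pmatrix}
=\begin{pmatrix}1&0\\ 0&R\end{pmatrix}
\begin{pmatrix}P(u)\\ P(b_1)\\ \vdots\\ P(b_n)\end{pmatrix},
\qquad R=(z_1^*,\dots,z_n^*),
\]
column contractivity of $P$ yields only the bound $\max(1,\|R\|)\,\bigl\|u^*u+\sum_k b_k^*b_k\bigr\|^{1/2}$, and this is \emph{not} controlled by $\|u^*u+f^*f\|^{1/2}$: replacing $z_k$ by $\varepsilon z_k$ and $b_k$ by $\varepsilon^{-1}b_k$ leaves $f$ unchanged while $\sum_k b_k^*b_k$ blows up, so an arbitrary representation retains no usable norm information about $f$.

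The line can be repaired, but only with a substantive extra argument: one needs a special representation, e.g.\ $f=\lim_\alpha e_\alpha f$ where $(e_\alpha)$ is a contractive approximate identity of $\langle X^*X\rangle$ consisting of elements of the form $e_\alpha=\sum_k v_k^*v_k$ with $v_k\in X$ (such elements are dense in the positive cone, since for $a\in\linsp(X^*X)$ one can rewrite $a^*a$ as $\sum_k v_k^*v_k$ using a square root of the positive matrix $(y_iy_j^*)$ in $M_n(\langle XX^*\rangle)$); then $\|R\|\le 1$ and $\sum_k(v_kf)^*(v_kf)=f^*e_\alpha f\le f^*f$, and a norm limit finishes. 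The paper avoids all of this with a self-improvement trick that requires no factorization of $f$ whatsoever: by the identity \eqref{eq:PdagP} and the $\langle X^*X\rangle$-bimodularity of the already constructed, contractive expectation $P^\dagger P$,
\[
\Bigl\|\begin{pmatrix}P(t)\\ P^\dagger P(b)\end{pmatrix}\Bigr\|^2
=\bigl\|P^\dagger P\bigl(P(t)^*t+P^\dagger P(b)^*b\bigr)\bigr\|
\le\Bigl\|\begin{pmatrix}P(t)\\ P^\dagger P(b)\end{pmatrix}\Bigr\|\,
\Bigl\|\begin{pmatrix}t\\ b\end{pmatrix}\Bigr\|,
\]
and dividing by the left-hand factor gives contractivity of the column map at once. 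Either supply the approximate-identity argument in full or replace your sketch by this Cauchy--Schwarz-style bootstrap.
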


\begin{proof}
Note first that Theorem 2.5 of \cite{effros-ozawa-ruan} implies that $P:T\to X$ is a TRO conditional expectation.

By nondegeneracy, finite sums
$a = \sum_i b_i x_i^*$ where $b_i\in T$ and $x_i\in X$
are dense in $\langle TT^* \rangle$. For such $a$, define
\[
P P^\dagger (a) = \sum_i P(b_i) x_i^*
\]
(that $P P^\dagger$ is in fact well defined follows shortly).
By \eqref{eq:mod-norm}
\begin{align*}
\biggl\|\sum_i P(b_i) x_i^* \biggr\|
&= \sup \set{\|\sum_i P(b_i) x_i^* y\|}{y\in X, \|y\|\le 1} \\
&= \sup \set{\|P\bigl(\sum_i b_i x_i^* y\bigr)\|}{y\in X,  \|y\|\le 1}
\le \biggl\|\sum_i b_i x_i^* \biggr\|
\end{align*}
because $P$ is a contractive TRO conditional expectation.
It follows that $P P^\dagger$ is well defined and bounded with norm $1$.
Hence we may extend $P P^\dagger$ to a map from
$\langle T T^*\rangle$ into $\langle X X^*\rangle$, and
it is onto because on $\langle X X^*\rangle$ the map
$PP^\dagger$ is clearly the identity. Consequently, $PP^\dagger$ is a
$C^*$-algebra conditional expectation from
$\langle T T^*\rangle$ onto $\langle X X^*\rangle$.

Similarly, one can check that the map
$P^\dagger P$ is a $C^*$-algebra conditional expectation from
$\langle T^* T\rangle$ onto $\langle X^* X\rangle$.

The map
\[
E=\begin{pmatrix} P P^\dagger & P \\
                P^\dagger & P^\dagger P
   \end{pmatrix}
\col \begin{pmatrix} \langle{T T^*}\rangle & T \\
                       T^* & \langle{T^* T}\rangle
       \end{pmatrix}
       \to
      \begin{pmatrix} \langle{X X^*}\rangle & X \\
                       X^* & \langle{X^* X}\rangle
       \end{pmatrix}
\]
is a well-defined projection from $\A_T$ onto its $C^*$-subalgebra
$\A_X$. To prove that this map is a conditional expectation,
it is enough to show that it is contractive.
We shall do this using the module norm identity
\eqref{eq:mod-norm} again, this time applied to the TRO
\[
\begin{pmatrix} X \\ \langle X^*X\rangle \end{pmatrix}
\sub B(H, K\oplus H)
\]
and its  left linking algebra $\A_X$.
Let $t_i, r, s, v_j\in T$, $x_i, y_j, z\in X$ and $a\in\langle X^*X\rangle$.
Then
\begin{equation} \label{eq:E}
\begin{split}
\begin{pmatrix} P P^\dagger(\sum_i t_i x_i^*) & P(r) \\[1ex]
                P^\dagger(s^*) & P^\dagger P(\sum_j  y_j^* v_j)
\end{pmatrix}\begin{pmatrix} z \\ a \end{pmatrix}
&=\begin{pmatrix} \sum_i P(t_i) x_i^*z + P(r)a \\[1ex]
                P(s)^*z + P^\dagger P(\sum_j  y_j^* v_j)a
         \end{pmatrix} \\[1ex]
&
= \begin{pmatrix} P(\sum_i t_i x_i^*z + ra) \\[1ex]
               P^\dagger P(s^*z + \sum_j  y_j^* v_j a)
         \end{pmatrix}
\end{split}
\end{equation}
where the final step needs some explanation:
note that $P$ is a TRO conditional expectation,
$P^\dagger P$ is a $C^*$-algebra conditional expectation,
and that
\begin{equation} \label{eq:PdagP}
P(t)^* P(u) = P^\dagger P\bigl(P(t)^*u\bigr) = P^\dagger P\bigl(t^*P(u)\bigr)
\end{equation}
for every $t,u\in T$ essentially by definition.

In light of \eqref{eq:E} and \eqref{eq:mod-norm},
it is enough to show that the map
\[
\begin{pmatrix} P \\ P^\dagger P \end{pmatrix}
\]
is contractive. Using \eqref{eq:PdagP} again, we have
for every $t\in T$ and $b\in\langle T^*T\rangle$ that
\begin{align*}
\biggl \| \begin{pmatrix} P(t) \\ P^\dagger P(b) \end{pmatrix} \biggr\|^2
 &= \|P(t)^* P(t) + P^\dagger P(b)^* P^\dagger P(b)\|
 = \bigl\|P^\dagger P \bigl( P(t)^*t + P^\dagger P(b)^* b\bigr) \bigr\| \\
 &\le \biggl\| \begin{pmatrix} P(t) \\ P^\dagger P(b) \end{pmatrix}^*
          \begin{pmatrix} t \\ b \end{pmatrix} \biggr\|
 \le  \biggl\| \begin{pmatrix} P(t) \\ P^\dagger P(b) \end{pmatrix} \biggr\|
         \biggl\| \begin{pmatrix} t \\ b \end{pmatrix} \biggr\|.
\end{align*}
Hence
\[
\biggl\|\begin{pmatrix} P(t) \\ P^\dagger P(b) \end{pmatrix} \biggr\|
\le \biggl\| \begin{pmatrix} t \\ b \end{pmatrix} \biggr\|,
\]
as required.

Finally, we prove the uniqueness statement.
For $t\in T$ and $x\in X$,
\[
\begin{pmatrix} E'_{11}(tx^*) & 0 \\ 0 & 0 \end{pmatrix}
= E' \biggl( \begin{pmatrix} 0 & t \\ 0 & 0 \end{pmatrix}
           \begin{pmatrix} 0 & 0 \\ x^* & 0 \end{pmatrix} \biggr)
= \begin{pmatrix} E'_{12}(t)x^* & 0 \\ 0 & 0 \end{pmatrix}.
\]
Hence $E'_{11}(tx^*) = P(t)x^* = PP^\dagger(tx^*)$.
By nondegeneracy $E'_{11} = PP^\dagger$.
Equalities for the other entry maps follow similarly.
\end{proof}

Note that the main part of the above proof provides an alternative
direct proof of a special case of Theorem 2.5 of \cite{effros-ozawa-ruan},
namely it shows that a contractive TRO conditional expectation
onto a non-degenerate sub-TRO is necessarily completely bounded
(Theorem 2.5 of \cite{effros-ozawa-ruan} was used in the
above  proof only to deduce the algebraic properties of
the TRO-conditional expectation $P$ from its contractivity, so
the proof shows that contractivity together with the algebraic properties
imply complete contractivity).
We also have the following immediate corollary.

\begin{corollary} \label{EtoP}
Let  $T$ be a TRO and $X\sub T$ a nondegenerate sub-TRO. If $E:\A_T
\to \A_X$ is a conditional expectation \textup{(}onto $\A_X$\textup{)}
and $E$ maps $T\subset A_{T}$ into $T$, then $E$ is of the form
\eqref{formE} for $P:=E|_T$.
\end{corollary}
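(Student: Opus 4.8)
The plan is to reduce everything to the bimodule property of a conditional expectation, exactly as in the uniqueness argument of Theorem~\ref{thm:expectation-extension}. Recall that a conditional expectation $E$ onto a $C^*$-subalgebra $\A_X$ is an $\A_X$-bimodule map, so that $E(b_1 a b_2) = b_1 E(a) b_2$ for all $a\in\A_T$ and $b_1,b_2\in\A_X$; it is moreover $*$-preserving. These two facts, together with the nondegeneracy of $X$, are the only ingredients needed.

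First I would make sense of $P := E|_T$. Viewing $T$ as the upper-right corner of $\A_T$, the hypothesis that $E$ maps $T$ into $T$ together with $E(\A_T)\sub\A_X$ forces $E\bigl(\begin{smallmatrix} 0 & t \\ 0 & 0\end{smallmatrix}\bigr)$ to lie in the upper-right corner of $\A_X$, whose entries are exactly $X$; hence $P$ is a well-defined map $T\to X$. Since $E$ is a norm-one projection onto $\A_X$ and restricts to the identity on the corner $X$, the map $P$ is a contractive idempotent onto $X$, so Theorem~\ref{thm:expectation-extension} applies to $P$.

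The heart of the argument is to show that $E$ respects the matrix corner decomposition and to read off each corner map. For $t\in T$ and $x\in X$ I would factor the upper-left corner element as
\[
\begin{pmatrix} tx^* & 0 \\ 0 & 0\end{pmatrix}
= \begin{pmatrix} 0 & t \\ 0 & 0\end{pmatrix}
  \begin{pmatrix} 0 & 0 \\ x^* & 0\end{pmatrix},
\]
where the right-hand factor lies in $\A_X$; the bimodule property then gives $E\bigl(\begin{smallmatrix} tx^* & 0 \\ 0 & 0\end{smallmatrix}\bigr) = \bigl(\begin{smallmatrix} P(t)x^* & 0 \\ 0 & 0\end{smallmatrix}\bigr)$. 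Because the elements $tx^*$ span $\langle TT^*\rangle$ densely by nondegeneracy and $P(t)x^*\in\langle XX^*\rangle$, this shows the $(1,1)$ corner of $E$ is the map $\sum_i t_i x_i^*\mapsto\sum_i P(t_i)x_i^*$, that is, precisely $PP^\dagger$. A symmetric factorization $\bigl(\begin{smallmatrix} 0 & 0 \\ 0 & x^*t\end{smallmatrix}\bigr) = \bigl(\begin{smallmatrix} 0 & 0 \\ x^* & 0\end{smallmatrix}\bigr)\bigl(\begin{smallmatrix} 0 & t \\ 0 & 0\end{smallmatrix}\bigr)$, using the identity $\langle X^*T\rangle=\langle T^*T\rangle$, identifies the $(2,2)$ corner as $P^\dagger P$, and applying $*$-preservation of $E$ to the upper-right corner identifies the $(2,1)$ corner as $P^\dagger$. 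Assembling the four corners shows that $E$ has exactly the form \eqref{formE}.

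I expect the only genuine obstacle to be bookkeeping rather than substance: one must ensure that each corner element is written as a product with at least one factor truly in $\A_X$, which is exactly where nondegeneracy (via the identity $\langle X^*T\rangle=\langle T^*T\rangle$ and its analogues) enters, so that the bimodule property applies, and that the density arguments are carried out in the correct corner. Once the corner-respecting property is in place, one could alternatively simply invoke the uniqueness clause of Theorem~\ref{thm:expectation-extension}, since $E$ would then be a corner-decomposed conditional expectation onto $\A_X$ with $(1,2)$-entry equal to $P$.
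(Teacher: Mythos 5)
Your proposal is correct and follows essentially the same route as the paper: the corollary is stated there as immediate from Theorem~\ref{thm:expectation-extension}, whose uniqueness argument consists of exactly the corner factorizations, $\A_X$-bimodule property, $*$-preservation, and nondegeneracy-density steps you carry out. Your closing observation — that once the corner decomposition with $(1,2)$-entry $P$ is established one may simply invoke the uniqueness clause — is precisely the intended reading of ``immediate corollary.''
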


A direct motivation to study the construction described in Theorem \ref{thm:expectation-extension}
came from the desire to characterise \emph{contractive idempotent
  functionals} on $C_0(\QG)$, the $C^*$-algebra of continuous, vanishing
at infinity functions on a \emph{locally compact quantum group} $\QG$,
in terms of certain subspaces of $C_0(\QG)$ (which turn out to be
TROs). We refer the reader to \cite{contractiveidempotents} for the
details, noting that the explicit form of the conditional expectation
established in Theorem~\ref{thm:expectation-extension}
plays a significant role there.

On the other hand Theorem \ref{thm:expectation-extension} together
with the techniques developed in \cite{KaurRuan}, showing that a TRO
shares for example many approximation properties with its linking
algebra, allows us for example to show immediately that
Lance-nuclearity (in  the terminology of \cite{KaurRuan}) passes to
expected sub-TROs.

\begin{corollary}
Let $T$ be  a TRO and $X\subset T$ an expected nondegenerate
sub-TRO. If $T$ is Lance-nuclear then so is $X$.
\end{corollary}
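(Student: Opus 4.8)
The plan is to reduce the statement to the corresponding fact for the linking algebras and then invoke the standard stability of nuclearity under conditional expectations. The relevant result of \cite{KaurRuan} is precisely that a TRO $S$ is Lance-nuclear if and only if its linking $C^*$-algebra $\A_S$ is a nuclear $C^*$-algebra. Thus the corollary will follow once I show that $\A_X$ is nuclear, given that $\A_T$ is.

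First I would apply Theorem~\ref{thm:expectation-extension}: since $X$ is an expected nondegenerate sub-TRO, the TRO conditional expectation $P\col T\to X$ extends to a genuine $C^*$-algebra conditional expectation $E\col \A_T\to \A_X$. By the proposition relating nondegeneracy of $X$ to that of $\A_X$ inside $\A_T$, the hypothesis guarantees that $\A_X$ is a nondegenerate $C^*$-subalgebra of $\A_T$, so that $E$ really is a conditional expectation of $\A_T$ onto a $C^*$-subalgebra.

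Next I would use the well-known fact that nuclearity passes to the range of a conditional expectation. Concretely, if $\A_T$ is nuclear it has the completely positive approximation property, witnessed by nets of completely positive contractions $\phi_\lambda\col \A_T\to \M_{n_\lambda}$ and $\psi_\lambda\col \M_{n_\lambda}\to\A_T$ with $\psi_\lambda\circ\phi_\lambda\to \id_{\A_T}$ in the point-norm topology. Writing $\iota\col\A_X\hookrightarrow\A_T$ for the inclusion, the composed maps $\phi_\lambda\circ\iota\col \A_X\to\M_{n_\lambda}$ and $E\circ\psi_\lambda\col \M_{n_\lambda}\to\A_X$ are again completely positive contractions, using that $E$ is completely positive and contractive (a $C^*$-conditional expectation is automatically so, by Tomiyama's theorem). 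For $b\in\A_X$ one then has $(E\circ\psi_\lambda\circ\phi_\lambda\circ\iota)(b)=E(\psi_\lambda\phi_\lambda(b))\to E(b)=b$, since $E$ restricts to the identity on $\A_X$. Hence $\A_X$ has the completely positive approximation property and is therefore nuclear.

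Finally, reversing the first reduction via \cite{KaurRuan}, nuclearity of $\A_X$ yields that $X$ is Lance-nuclear, completing the argument. The only genuine subtlety is the bookkeeping in the first step: one must check that the hypotheses \emph{expected} and \emph{nondegenerate} are exactly what is needed both to produce the $C^*$-conditional expectation $E$ of Theorem~\ref{thm:expectation-extension} and to identify $\A_X$ as a subalgebra onto which $E$ projects. The approximation argument itself is entirely routine.
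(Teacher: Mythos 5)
Your proposal is correct and follows essentially the same route as the paper: reduce to the linking algebras via Theorem~6.1 of \cite{KaurRuan}, produce the conditional expectation $E\col\A_T\to\A_X$ from Theorem~\ref{thm:expectation-extension}, and invoke the stability of nuclearity under conditional expectations. The only difference is that you spell out the standard completely positive approximation argument for that last fact, which the paper simply cites as known.
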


\begin{proof}
This follows from Theorem \ref{thm:expectation-extension}, Theorem 6.1 of
\cite{KaurRuan} and the fact that nuclearity of $C^*$-algebras is
preserved under taking conditional expectations.
\end{proof}

\section{Extension to $W^*$-TROs}

In this section we present a version of Theorem
\ref{thm:expectation-extension} for $W^*$-TROs.
A \emph{$W^*$-TRO} is a TRO $T\sub \B(H,K)$ that is closed under the weak*
topology. We may assume without loss of generality that
$T$ is \emph{nondegenerately represented}, i.e.\ that
$\la TH\ra = K$ and $\la T^* K\ra = H$.
A $W^*$-sub-TRO $X\sub T$ is \emph{nondegenerate} if
the linear spans of $XT^*T$ and $TT^*X$ are  weak*-dense in $T$. The next proposition connects this property to $X$ itself being nondegenerately represented.

\begin{proposition} \label{propn:W*-nondeg}
Suppose that $T$ is a nondegenerately represented $W^*$-TRO
in $\B(H,K)$. A $W^*$-sub-TRO $X\sub T$ is nondegenerate
if and only if $X$ is nondegenerately represented in $\B(H,K)$.
\end{proposition}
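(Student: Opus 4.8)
The plan is to reduce both nondegeneracy conditions to a statement about two projections and then match them. Let $p\in\B(K)$ be the orthogonal projection onto $\la XH\ra$ and $q\in\B(H)$ the projection onto $\la X^*K\ra$; thus $X$ is nondegenerately represented precisely when $p=1_K$ and $q=1_H$. The first observation is that $p$ coincides with the unit of the left linking von Neumann algebra $\overline{\la XX^*\ra}^{\,w*}$ (and $q$ with the unit of $\overline{\la X^*X\ra}^{\,w*}$). Indeed, the ranges of the elements of $X$ lie in $pK$, so $pX=X$, and using $\overline{\mathrm{ran}\,x}=\overline{\mathrm{ran}\,xx^*}$ one checks $sK=pK$ for the support projection $s$ of $\la XX^*\ra$ on $K$; consequently $p(xy^*)=xy^*=(xy^*)p$ for $x,y\in X$, so $p$ acts as a unit on the generators of $\la XX^*\ra$ and is the weak*-limit of an approximate unit $(w_\mu)\sub\la XX^*\ra$. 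The symmetric statements hold for $q$ and $Xq=X$.

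The heart of the argument is the pair of identities
\[
\overline{\la XT^*T\ra}^{\,w*}=\overline{pT}^{\,w*}
\qquad\text{and}\qquad
\overline{\la TT^*X\ra}^{\,w*}=\overline{Tq}^{\,w*}.
\]
For the first, the inclusion $\sub$ is immediate from $pX=X$: since $X\sub T$ we have $XT^*T\sub TT^*T\sub T$, whence $XT^*T=p(XT^*T)\sub pT$. For $\bus$ I would fix $t\in T$ and approximate $pt$: writing $p=\wlim_\mu w_\mu$ with each $w_\mu\in\la XX^*\ra$ a norm-limit of finite sums $\sum_i x_iy_i^*$, right multiplication by the fixed $t$ is weak*-continuous, and $\bigl(\sum_i x_iy_i^*\bigr)t=\sum_i x_i(y_i^*t)\in XT^*T$ because $y_i^*t\in X^*T\sub T^*T$; hence $pt\in\overline{\la XT^*T\ra}^{\,w*}$. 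The second identity follows in the same way from $Xq=X$ and an approximate unit in $\la X^*X\ra$.

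It remains to translate these identities into the statement. The $W^*$-sub-TRO $X$ is nondegenerate exactly when both weak*-closed spans equal $T$, i.e.\ when $\overline{pT}^{\,w*}=T$ and $\overline{Tq}^{\,w*}=T$. Now $\overline{pT}^{\,w*}=T$ holds if and only if $p=1_K$: the implication $\Leftarrow$ is trivial, while if $\overline{pT}^{\,w*}=T$ then applying the weak*-continuous left multiplication by $1-p$, which annihilates $pT$, gives $(1-p)T=0$, so $p$ fixes every element of $T$ from the left; as $T$ is nondegenerately represented the ranges of its elements span $K$, forcing $p=1_K$. Symmetrically $\overline{Tq}^{\,w*}=T$ if and only if $q=1_H$. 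Combining, $X$ is nondegenerate if and only if $p=1_K$ and $q=1_H$, which is precisely nondegeneracy of the representation of $X$.

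I expect the main obstacle to be the bookkeeping of weak* closures in the second paragraph: the approximate-unit argument must be run entirely in the weak* topology, and one must invoke normality of multiplication by a fixed operator so that the limits can be pushed through the spans. The conceptual linchpin is the identification of $p$ with both the range projection of $X$ and the unit of $\overline{\la XX^*\ra}^{\,w*}$, since it is this double role that converts the module-theoretic nondegeneracy of $X$ inside $T$ into the Hilbert-space nondegeneracy of the representation.
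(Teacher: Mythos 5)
Your proof is correct, and it is organized around a genuinely different device than the paper's. You reduce both conditions to the single statement that the support projections $p$ (of $\la XX^*\ra$ on $K$) and $q$ (of $\la X^*X\ra$ on $H$) equal $1_K$ and $1_H$, via the identities $\overline{\la XT^*T\ra}^{\,w*}=\overline{pT}^{\,w*}$ and $\overline{\la TT^*X\ra}^{\,w*}=\overline{Tq}^{\,w*}$, which hold with no nondegeneracy assumption on $X$. The paper instead treats the two implications separately and asymmetrically: for ``nondegenerate $\Rightarrow$ nondegenerately represented'' it notes that the weak*-dense linear spans of $XT^*T$ and $TT^*X$, being convex, are strong-operator dense in $T$, and then approximates vectors in $TH$ (respectively $T^*K$) by vectors in $XH$ (respectively $X^*K$); for the converse it observes that nondegenerate representation of $X$ makes $(XX^*)''$ a von Neumann algebra containing $1_K$ and then runs, in effect, your approximate-identity argument --- this is exactly the $\bus$ inclusion of your first identity in the special case $p=1_K$. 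So your converse direction coincides in substance with the paper's, while your forward direction replaces the convexity/SOT-density argument at the vector level by an operator-level one: left multiplication by $1-p$ is weak*-continuous and annihilates $\linsp(XT^*T)$, hence annihilates its weak* closure $T$, and nondegeneracy of the representation of $T$ then forces $p=1_K$. Both routes are sound; yours buys a symmetric treatment and isolates reusable identities (at the cost of verifying that $p$ is both the range projection of $X$ and the weak* limit of an approximate unit of $\la XX^*\ra$), while the paper's argument is shorter and dispenses with the projections altogether.
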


\begin{proof}
If $X$ is nondegenerate, then the linear
spans of $XT^*T$ and $TT^*X$ are (as convex sets) so-dense in $T$, so
a simple calculation allows us to approximate elements in $ T H$
(respectively, in $T^* K$) by elements in $ X H$ (respectively, in $
X^* K$).

Conversely, if $X$ is nondegenerately represented in
$\B(H,K)$, then $(XX^*)''$ is a (nondegenerate) von
Neumann algebra on $K$ and hence contains the identity operator on
$K$. It follows that $T$ is contained in the weak*-closed linear span of
$XX^*T$. Similarly, $T$ is contained in the weak*-closed
linear span of $TX^*X$, and so $X$ is nondegenerate in $T$.
\end{proof}

We need one more lemma which may be viewed as a version of the Kaplansky theorem for nondegenerate $W^*$-sub-TROs.

\begin{lemma} \label{lem:kap}
Suppose that $T$ is a nondegenerately represented $W^*$-TRO
in $\B(H,K)$ and that $X\sub T$ is  a nondegenerate $W^*$-sub-TRO.
For every $u$ in $(TT^*)''$
there is a net $(a_\alpha)$ in $\linsp(TX^*)$
such that $(a_\alpha)$ is bounded by $\|u\|$
and $a_\alpha\to u$ weak*. Similar statements hold when
$(TT^*)''$ is replaced by
\[
(T^*T)''\qquad  \left( \text{ respectively,}
\begin{pmatrix} (TT^*)'' & T \\ T^* & (T^*T)'' \end{pmatrix}\; \right)
\]
and $\linsp(TX^*)$ is replaced by
\[
\linsp(X^*T)\qquad \left(  \text{ respectively,}
 \begin{pmatrix} \linsp(TX^*) & T \\ T^* & \linsp(X^*T) \end{pmatrix}\;\; \right).
\]
\end{lemma}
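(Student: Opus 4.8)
The plan is to reduce everything to two bounded approximate units coming from $X$ and then exploit the ternary relations. First I would isolate the only place nondegeneracy enters: by Proposition~\ref{propn:W*-nondeg} the sub-TRO $X$ is nondegenerately represented, so $(XX^*)''$ acts nondegenerately on $K$ and $(X^*X)''$ acts nondegenerately on $H$, whence $1_K\in(XX^*)''$ and $1_H\in(X^*X)''$. Using $XX^*X\sub X$ one checks that $\linsp(XX^*)$ and $\linsp(X^*X)$ are self-adjoint subalgebras, nondegenerate on $K$ and $H$ respectively, hence weak*-dense in $(XX^*)''$ and $(X^*X)''$. Kaplansky's density theorem then produces nets $(e_\gamma)$ in $\linsp(XX^*)$ and $(f_\gamma)$ in $\linsp(X^*X)$ with $0\le e_\gamma,f_\gamma$, $\|e_\gamma\|,\|f_\gamma\|\le 1$, and $e_\gamma\to 1_K$, $f_\gamma\to 1_H$ $\ast$-strongly. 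The conceptual point to keep in mind is that $\linsp(TX^*)$ is \emph{not} self-adjoint, so Kaplansky cannot be applied to it directly; instead the bounded approximants will be manufactured by multiplying the target by these units, the norm bound being supplied by submultiplicativity.

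For the first corner, given $u\in(TT^*)''$ I would simply set $a_\gamma:=u\,e_\gamma$. Writing $e_\gamma=\sum_j x_j y_j^*$ with $x_j,y_j\in X$ gives $a_\gamma=\sum_j (u x_j)\,y_j^*$, and $u x_j\in(TT^*)''\,T\sub T$; here I use that $T$ is weak*-closed and that multiplication is separately weak*-continuous, so that $(TT^*)''\,T\sub\overline{\linsp(TT^*)\,T}^{\,\mathrm{w}*}\sub T$. Thus $a_\gamma\in\linsp(TX^*)$, while $\|a_\gamma\|\le\|u\|\,\|e_\gamma\|\le\|u\|$, and $a_\gamma=u e_\gamma\to u\cdot 1_K=u$ weak* because left multiplication by the fixed operator $u$ is normal. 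The corner $(T^*T)''$ with $\linsp(X^*T)$ is identical after replacing $u e_\gamma$ by $f_\gamma\,v$ (now multiplying on the \emph{left}) and using $X(T^*T)''\sub T$.

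The matrix case is where the genuine difficulty appears, and I expect it to be the main obstacle: the two diagonal slots $\linsp(TX^*)$ and $\linsp(X^*T)$ require the factor taken from $X$ on \emph{opposite} sides, so no one-sided multiplication can correct both at once. The remedy is a two-sided sandwich. I would put $g_\gamma:=\begin{pmatrix} e_\gamma & 0\\ 0 & f_\gamma\end{pmatrix}$, a positive contraction in the $W^*$-linking algebra $\begin{pmatrix}(XX^*)'' & X\\ X^* & (X^*X)''\end{pmatrix}$ whose unit is $1_{K\oplus H}$ (since $1_K\in(XX^*)''$ and $1_H\in(X^*X)''$). For $U$ in the $W^*$-linking algebra $\begin{pmatrix}(TT^*)'' & T\\ T^* & (T^*T)''\end{pmatrix}$ I would take $A_\gamma:=g_\gamma\,U\,g_\gamma$. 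A direct inspection of the four entries, using $XX^*X\sub X$, $TT^*T\sub T$ and the module inclusions above, shows each entry falls into the corresponding slot of the subspace $\begin{pmatrix}\linsp(TX^*) & T\\ T^* & \linsp(X^*T)\end{pmatrix}$ of the statement (the $(1,1)$-entry in $\linsp(TX^*)$, the $(2,2)$-entry in $\linsp(X^*T)$, and the off-diagonal entries back in $T$ and $T^*$); moreover $\|A_\gamma\|\le\|U\|$. Finally $g_\gamma\to 1_{K\oplus H}$ $\ast$-strongly, so the standard identity $A_\gamma-U=g_\gamma U(g_\gamma-1)+(g_\gamma-1)U$ for a bounded net yields $A_\gamma\to U$ $\ast$-strongly, hence weak*. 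Thus the real content is recognising that the non-self-adjointness of $\linsp(TX^*)$ forces one to build the contractive approximants as products with the $X$-approximate units and, in the matrix case, to sandwich on both sides.
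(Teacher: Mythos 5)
Your proof is correct, and it shares the paper's basic strategy (manufacture bounded approximants by multiplying with approximate units drawn from $\linsp(XX^*)$ and $\linsp(X^*X)$, since the non-self-adjoint space $\linsp(TX^*)$ admits no direct Kaplansky argument), but your execution differs from the paper's in two genuine ways. First, the paper proceeds in two stages: it verifies the claim for $u\in\linsp(TT^*)$ via the net $(ue_\alpha)$, and then reaches general $u\in(TT^*)''$ by composing this with a Kaplansky approximation for the inclusion $\linsp(TT^*)\subset(TT^*)''$, which forces an iterated-limit argument. You collapse this to a single step by observing the module property $(TT^*)''\,T\sub T$ (from separate weak*-continuity of multiplication and weak*-closedness of $T$), so that $u e_\gamma$ already lies in $\linsp(TX^*)$ for \emph{arbitrary} $u\in(TT^*)''$; this removes the double approximation entirely and is a clean simplification. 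Second, in the matrix case the paper sandwiches asymmetrically, using
\[
\begin{pmatrix} 1_K & 0 \\ 0 & f_\beta\end{pmatrix}
v \begin{pmatrix} e_\alpha & 0 \\ 0 & 1_H \end{pmatrix}
\]
applied only to $v$ in the norm-span matrix, followed again by weak* approximation, whereas you use the symmetric sandwich $g_\gamma U g_\gamma$ with $g_\gamma = \operatorname{diag}(e_\gamma,f_\gamma)$ applied directly to arbitrary $U$ in the $W^*$-linking algebra; your entrywise verification (each diagonal entry lands in the right span by the module inclusions, each off-diagonal entry stays in $T$ or $T^*$) is sound, the identity $A_\gamma-U=g_\gamma U(g_\gamma-1)+(g_\gamma-1)U$ gives the convergence, and the norm bound is immediate. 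What the paper's route buys is that it only ever multiplies von Neumann algebra elements against elements of norm-closed spans, so no module-property lemma is needed; what your route buys is the elimination of all iterated limits, at the modest cost of justifying $(TT^*)''\,T\sub T$ and $T\,(T^*T)''\sub T$, which you do correctly.
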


\begin{proof}
As in the proof of Proposition~\ref{propn:W*-nondeg},
the C*-algebra $\la XX^*\ra$ is nondegenerately represented
on $K$. Let $(e_\alpha)$ be a contractive approximate identity
in $\la XX^*\ra$. We may suppose that each $e_\alpha\in\linsp(XX^*)$.
Moreover, note that $e_\alpha\to 1_K$ in the weak* topology due to nondegeneracy.
When $u\in \linsp(TT^*)$, the net $(ue_\alpha)$ satisfies the
statement. The general case of $(TT^*)''$ follows by weak* approximation, using the Kaplansky theorem for the inclusion $\linsp(TT^*) \subset (TT^*)''$.

As for the final statements, let
$(f_\beta)\sub \linsp(X^*X)$ be a contractive approximate identity
for $\la X^*X\ra$.
Now we may continue as above, using
the nets $(f_\beta u)$ for $u\in \linsp(T^*T)$ and
\[
\begin{pmatrix} 1_K & 0 \\ 0 & f_\beta\end{pmatrix}
v \begin{pmatrix} e_\alpha & 0 \\ 0 & 1_H \end{pmatrix}
\quad \text{for} \quad v \in
\begin{pmatrix} \linsp(TT^*) & T \\ T^* & \linsp(T^*T) \end{pmatrix}.
\]
\end{proof}

We are ready to formulate the main theorem of this section.
Note that the result does not immediately follow
from Theorem~\ref{thm:expectation-extension} as
the $W^*$-sub-TRO $X$ need not satisfy the stronger
nondegeneracy condition of Theorem~\ref{thm:expectation-extension}
(i.e.\ nondegeneracy with respect to the norm topology).

\begin{theorem} \label{thm:W*-expectation}
Assume that $X$ and $T$ are $W^*$-TROs, $X$ is nondegenerate in $T$
\textup{(}in the $W^*$-TRO sense\textup{)}
and that $P : T\to X$ is a normal \textup{(}i.e.\ weak$^*$-continuous\textup{)}
contractive surjective idempotent map. Then $P$ extends to a normal
conditional expectation from $\R_T$ onto $\R_X$, where $\R_T = \A_T''$
and $\R_X = \A_X''$.
\end{theorem}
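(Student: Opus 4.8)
The plan is to imitate the construction of Theorem~\ref{thm:expectation-extension}, but to build the map $E$ first only on a weak*-dense $*$-subspace of $\R_T$ and then to push it to all of $\R_T$ by normality, with Lemma~\ref{lem:kap} supplying the bounded approximating nets that the weak* topology requires. Theorem~2.5 of \cite{effros-ozawa-ruan} still applies to the contraction $P$, so $P$ is a (now normal) TRO conditional expectation. I would then define
\[
E = \begin{pmatrix} PP^\dagger & P \\ P^\dagger & P^\dagger P \end{pmatrix}
\]
on the core
\[
\mathcal{C} = \begin{pmatrix} \linsp(TX^*) & T \\ T^* & \linsp(X^*T) \end{pmatrix},
\]
using the same formulas $PP^\dagger(\sum_i t_i x_i^*) = \sum_i P(t_i)x_i^*$ and $P^\dagger P(\sum_j y_j^* v_j) = \sum_j y_j^* P(v_j)$ as before; the off-diagonal entries are literally $P$ and $P^\dagger$, which are normal by hypothesis.

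The well-definedness and contractivity of $E$ on $\mathcal{C}$ I would obtain by repeating the matrix computation \eqref{eq:E}--\eqref{eq:PdagP} of Theorem~\ref{thm:expectation-extension}, this time reading $\R_X$ as the \emph{left} linking algebra of the column $W^*$-TRO $\begin{pmatrix} X \\ (X^*X)'' \end{pmatrix}$. The only point needing a remark is that the module-norm identity \eqref{eq:mod-norm} persists at the von Neumann level, i.e.\ the norm of an element of $\R_X$ is computed by its action on the unit ball of this column TRO; granting this, the estimate reduces, exactly as before, to the contractivity of the single column $\begin{pmatrix} P \\ P^\dagger P \end{pmatrix}$, which is the identity \eqref{eq:PdagP} together with a Cauchy--Schwarz step and uses nothing beyond what was already proved. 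As in the proof of Theorem~\ref{thm:expectation-extension}, this one computation simultaneously shows that the corner formulas do not depend on the chosen representation of an element of $\mathcal{C}$.

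The hard part will be the passage from $\mathcal{C}$ to $\R_T$: a bounded map on a weak*-dense subspace need not extend normally, so I must show that $E|_{\mathcal{C}}$ is weak*-continuous on bounded sets. Here I would exploit the two module identities $PP^\dagger(a)\,y = P(ay)$ for $a\in\linsp(TX^*)$, $y\in X$, and $x\,P^\dagger P(b) = P(xb)$ for $b\in\linsp(X^*T)$, $x\in X$, both immediate from the TRO conditional expectation property of $P$. If $a_\gamma\to a$ weak* along a net in $\linsp(TX^*)$ bounded by some constant, then $a_\gamma y\to ay$ weak* in $T$ for each fixed $y$ (multiplication being separately weak* continuous), whence $P(a_\gamma y)\to P(ay)$ weak* by normality of $P$; since $X$ acts nondegenerately on $K$ (Proposition~\ref{propn:W*-nondeg}) and the $PP^\dagger(a_\gamma)$ are uniformly bounded, this forces $PP^\dagger(a_\gamma)\to PP^\dagger(a)$ weak*, and symmetrically for $P^\dagger P$. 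Together with the normality of $P$ and $P^\dagger$ on the off-diagonal, this shows $E|_{\mathcal{C}}$ is weak*-continuous on bounded sets.

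Finally, Lemma~\ref{lem:kap} lets me write any $v\in\R_T$ as the weak* limit of a net in $\mathcal{C}$ bounded by $\|v\|$, so the bounded weak*-continuity just established produces a well-defined normal extension $E\col\R_T\to\R_X$; it is automatically contractive by weak* lower semicontinuity of the norm, and it is idempotent with range $\R_X$ because it is so on the weak*-dense core $\mathcal{C}$ of $\R_T$ (whose image is the corresponding core of $\R_X$) and is normal. A normal contractive idempotent onto a von Neumann subalgebra is a conditional expectation by Tomiyama's theorem, which finishes the argument; uniqueness of $E$, if wanted, follows exactly as in Theorem~\ref{thm:expectation-extension}.
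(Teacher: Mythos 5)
Your outline coincides with the paper's proof in all its main ingredients: the same core $\begin{pmatrix} \linsp(TX^*) & T \\ T^* & \linsp(X^*T)\end{pmatrix}$, the same corner formulas, Lemma~\ref{lem:kap} to supply bounded approximating nets, weak* continuity on bounded sets via normality of $P$ together with nondegeneracy of the $X$-action, and Tomiyama at the end. The one real difference is organisational --- you build the whole matrix map on the core at once, whereas the paper first extends each diagonal corner separately to a normal conditional expectation $\widetilde{PP^\dagger}\col (TT^*)''\to (XX^*)''$, $\widetilde{P^\dagger P}\col (T^*T)''\to (X^*X)''$, and only then assembles the matrix --- and this difference is where your argument has a genuine gap. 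You assert that the contractivity of $E$ on the core follows by ``repeating the matrix computation \eqref{eq:E}--\eqref{eq:PdagP}\ldots exactly as before'' and that it ``uses nothing beyond what was already proved''. It does not: in the $C^*$ proof, the second row of \eqref{eq:E} and the Cauchy--Schwarz step apply $P^\dagger P$ to elements such as $s^*z\in T^*X$, to $\sum_j y_j^* v_j a$ with $a$ in the coefficient algebra, and they use that $P^\dagger P$ is a globally defined, $*$-preserving $C^*$-conditional expectation on $\langle T^*T\rangle$. That global definition came from \emph{norm}-nondegeneracy, i.e.\ norm-density of $\linsp(X^*T)$ in $\langle T^*T\rangle$ --- exactly the hypothesis that a $W^*$-nondegenerate sub-TRO need not satisfy, which is the very reason the paper notes that Theorem~\ref{thm:W*-expectation} is not a corollary of Theorem~\ref{thm:expectation-extension}. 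At the stage where you run the computation, your $P^\dagger P$ is defined only on $\linsp(X^*T)$, which is not an algebra, need not be norm-dense in $\langle T^*T\rangle$, and does not contain $s^*z$ or the other elements above; so the step, as written, is not available.

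The paper's ordering is precisely the repair: it first carries out your third paragraph (bounded weak* continuity via the module identity, normality of $P$ and density of the functionals $\omega_{y\xi,\eta}$), extends $PP^\dagger$ and $P^\dagger P$ by Lemma~\ref{lem:kap}, proves an iterated-limit argument showing the \emph{extensions} are weak* continuous on bounded sets, and deduces they are normal conditional expectations (positivity plus the order-theoretic criterion for normality); only then does it check contractivity of the matrix map on the core, where $\widetilde{P^\dagger P}$ may legitimately be applied to $s^*z$ and to products with $(X^*X)''$. Alternatively one can salvage your ordering by reworking the estimate so that $P^\dagger P$ is only ever applied to products like $P(c_1)^*c_1$ and $E(v)w$-type terms, which the TRO identities (e.g.\ $X^*XX^*\sub X^*$) do keep inside $\linsp(X^*T)$ --- but that is a different, more laborious computation, not ``exactly as before''. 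Two smaller glosses in the same spirit: normality of the final extension is not automatic from the construction (you need the iterated-limit step plus either Krein--\v{S}mulian or the order-theoretic criterion, as the paper does); and your unproved remark that \eqref{eq:mod-norm} persists for $W^*$-linking algebras is in fact true (bounded approximate identity plus lower semicontinuity of the norm), though avoidable, since $E$ maps the core into the norm-closed linking algebra $\A_X$, where Hamana's lemma already applies.
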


\begin{proof}
As before, we use Theorem 2.5 of \cite{effros-ozawa-ruan} to observe
that $P$ is a TRO conditional expectation.

Define $PP^\dagger$ on $\linsp(TX^*)$ as in the $C^*$-algebra case (see Theorem \ref{thm:expectation-extension}).
Consider a net $(a_\alpha)$ in $\linsp(TX^*)$ converging weak* to
$a\in\linsp(TX^*)$. Write
$a_\alpha = \sum_{i=0}^{n_\alpha} b_i^\alpha (x_i^\alpha)^*$
and $a = \sum_{i=0}^n b_i (x_i)^*$ where $n_{\alpha} \in \bn$, $b_i^\alpha\in T$,
$x_i^\alpha\in X$, etc..
Due to nondegeneracy, functionals $\omega_{y\xi,\eta}$
with $y\in X$, $\xi\in H$ and $\eta\in K$ are linearly dense
in $\B(K)_*$. Since $P$ is normal,
\begin{align*}
\omega_{y\xi,\eta}\bigl(PP^\dagger(a_\alpha)\bigr)
&= \sum_{i=0}^{n_\alpha} \ppair{P(b_i^\alpha) (x_i^\alpha)^*y \xi\mid \eta}
=  \biggppair{P\Bigl(\sum_{i=0}^{n_\alpha}b_i^\alpha
  (x_i^\alpha)^*y\Bigr)
      \xi\biggm| \eta}\\
&\to \biggppair{P\Bigl(\sum_{i=0}^{n}b_i (x_i)^*y\Bigr) \xi\biggm| \eta}
=  \omega_{y\xi,\eta}\bigl(PP^\dagger(a)\bigr).
\end{align*}
By approximation, we see that $PP^\dagger$
is weak* continuous on bounded subsets of $\linsp(TX^*)$
(note that, as in the proof of Theorem~\ref{thm:expectation-extension},
$PP^\dagger$ is contractive on $\linsp(TX^*)$).

Extend $PP^\dagger$ to $(TT^*)''$, which is
the weak* closure of $\linsp(TX^*)$, by defining
\[
\widetilde{PP^\dagger}(u) = \wlim PP^\dagger(a_\alpha)
\]
where $(a_\alpha)\sub \linsp(TX^*)$ is bounded
and converges to $u$ in the weak* topology
(such a net exists by Lemma~\ref{lem:kap}).
First note that the extension is well-defined:
a weak* cluster point of $(PP^\dagger(a_\alpha))$ always exists
due to boundedness, and
if $(b_\beta)$ is another net satisfying the requirements,
then $a_\alpha-b_\beta \to 0$ weak* in $\linsp(TX^*)$
(joint net with the natural direction),
and since $(a_\alpha-b_\beta)$ is bounded
and $PP^\dagger$ is weak* continuous on bounded sets,
it follows that $\wlim PP^\dagger(a_\alpha) = \wlim PP^\dagger(b_\beta)$.

Next we show that the extension is weak* continuous
on bounded sets. Let $(u_\alpha)$ be a bounded net in $(TT^*)''$
converging weak* to $u$.
For every finite set $F\sub \B(K)_*$, every natural number $n$
and every $\alpha$, choose $a_{\alpha}^{n,F}$ in $\linsp(TX^*)$ with
$\|a_{\alpha}^{n,F}\|\le \|u_\alpha\|$ such that for every $\omega\in F$
\[
\omega(a_{\alpha}^{n,F} - u_\alpha) < 1/n
\]
(note the use of Lemma~\ref{lem:kap} in the choice of $a_{\alpha}^{n,F}$).
Then $\wlim_{(n,F)} a_\alpha^{n,F} = u_\alpha$ and
$\wlim_{(\alpha,n,F)} a_\alpha^{n,F} = u$. %
By definition,
\[
\widetilde{PP^\dagger}(u_\alpha) = \wlim_{(n,F)} PP^\dagger(a_\alpha^{n,F})
\quad\text{and}\quad
\widetilde{PP^\dagger}(u) = \wlim_{(\alpha,n,F)} PP^\dagger(a_\alpha^{n,F}),
\]
and hence
\begin{align*}
\wlim_\alpha\widetilde{PP^\dagger}(u_\alpha)
&= \wlim_\alpha \wlim_{(n,F)} PP^\dagger(a_\alpha^{n,F})\\
&= \wlim_{(\alpha,n,F)} PP^\dagger(a_\alpha^{n,F})
= \widetilde{PP^\dagger}(u).
\end{align*}
Thus $\widetilde{PP^\dagger}$ is weak* continuous on bounded sets.

We next claim that $\widetilde{PP^\dagger}$ is contractive.
Let $u\in (TT^*)''$ and apply Lemma~\ref{lem:kap}
to obtain a net $(a_\alpha)$ in $\linsp(TX^*)$, bounded by $\|u\|$,
converging weak* to $u$.
For every $\omega\in \B(K)_*$,
\[
|\omega\bigl(\widetilde{PP^\dagger}(u)\bigr)|
= \lim_\alpha|\omega\bigl(PP^\dagger(a_\alpha)\bigr)|
\le \lim_\alpha \|a_\alpha\|\|\omega\|
\le \|u\| \|\omega\| .
\]
That is, $\widetilde{PP^\dagger}$ is contractive.

It is easy to check that $\widetilde{PP^\dagger}$ is a projection
onto $(XX^*)''$. Indeed, $\widetilde{PP^\dagger}$
agrees with the identity map on $\linsp(XX^*)$,
and then using the Kaplansky density theorem and the fact that
$\widetilde{PP^\dagger}$ is weak* continuous on bounded sets,
we obtain the claim.
Combined with contractivity, we see that
$\widetilde{PP^\dagger}$ is a conditional expectation
and thus in particular is positive.

By the order-theoretic characterisation of normality  (see for example
\cite[Definition~46.1 and Proposition~43.1]{conway:cop}),
positivity and weak* continuity on bounded sets
imply that $\widetilde{PP^\dagger}$ is normal.
So $\widetilde{PP^\dagger}$ is a normal conditional expectation.
A similar argument produces a normal conditional expectation
$\widetilde{P^\dagger P}$, and as in the earlier case,
we form the the matrix map from $\R_T$ onto $\R_X$.

We still need to check that the matrix map is contractive.
As in the proof of Theorem~\ref{thm:expectation-extension},
we see that the map
\[
\begin{pmatrix} P P^\dagger & P \\
                P^\dagger & P^\dagger P
   \end{pmatrix}
\col \begin{pmatrix} \linsp(T X^*) & T \\
                       T^* & \linsp(X^* T)
       \end{pmatrix}
       \to
      \begin{pmatrix} \linsp(X X^*) & X \\
                       X^* & \linsp(X^* X)
       \end{pmatrix}
\]
is contractive. Using Lemma~\ref{lem:kap} again, we may apply the same
argument as in the case of $\widetilde{PP^\dagger}$ to see that
\[
\begin{pmatrix} \widetilde{P P^\dagger} & P \\
                P^\dagger & \widetilde{P^\dagger P}
   \end{pmatrix}
\]
is contractive.
\end{proof}

\begin{acknow}
The work on this paper started during the visit
of the second-named author to the University of Oulu in May 2012.
We thank the Finnish Academy of Science and Letters, Vilho,
Yrj\"o and Kalle V\"ais\"al\"a Foundation,
for making this visit possible.
The first-named author was also supported by the Emil Aaltonen
Foundation. The second-named author was
partly supported by the National Science Centre (NCN) grant
no.~2011/01/B/ST1/05011. We thank the referee for a careful and thoughtful reading of our manuscript.
\end{acknow}

\end{document}